\newtheorem{Definition}{Definition}[section]
\newtheorem{Theorem}[Definition]{Theorem}
\newtheorem{Corollary}[Definition]{Corollary}
\newtheorem{Proposition}[Definition]{Proposition}
\newtheorem{Lemma}[Definition]{Lemma}
\numberwithin{equation}{section}
\numberwithin{figure}{section}
\title[A remark on rapid mixing for hyperbolic flows]{A remark on rapid mixing for hyperbolic flows}
\date{\today}
\subjclass[2020]{Primary: 37A25, 37C30; Secondary: 37D20.}
\keywords{Rate of mixing, Hyperbolic flow, Diophantine, Temporal distance function}
\author[DAOFEI ZHANG]{Daofei Zhang}
\address{School of Mathematics and Computing Science, Guilin University of Electronic Technology, Guilin, 541004, China}
\email{Daofei.Zhang@guet.edu.cn}
\begin{document}

\begin{abstract}
We establish an improved criterion for rapid mixing of hyperbolic flows by weakening the requirement on the temporal distance function from positive box dimension to the existence of two values whose ratio is Diophantine. We also demonstrate the applicability of our results through explicit examples where the previous dimension condition were either too restrictive or computationally infeasible to verify. 
\end{abstract}

\maketitle

\section{Introduction}\label{sec 1}

\subsection{Statement of main results}\label{subsec 1.1}
Estimating the mixing rates of smooth flows with some hyperbolicity is a highly challenging problem. Even in the classical case of uniformly hyperbolic flows, the problem has not been fully understood either. There are examples for which the mixing may be arbitrarily slow \cite{Pol85}. But in contrast to this there are positive results show rapid mixing under a Diophantine condition on closed orbits or a dimension condition on the temporal distance function. In this paper, as already stated in the abstract, we improve the above criterion on the temporal distance function by reducing the positive box dimension condition to the existence of two Diophantine-related values. We will also give examples illustrate the improvement is indeed useful and applicable by showing that the previous dimension condition cannot be verified

We begin by recalling a classic result of Dolgopyat from \cite{Dol98b}. Let \( M \) be a compact smooth Riemannian manifold, and let \( g_t: \Lambda \to \Lambda \subset M \) be a \( C^\infty \) hyperbolic flow. Denoted by \(W^s\) and \(W^u\) the stable and unstable manifolds respectively. Given \(\varepsilon > 0\) small, let $W^s_{\varepsilon}$ and $W^u_{\varepsilon}$ be the local stable and unstable manifolds of size \(\varepsilon\) respectively. There exists \(\varepsilon_0 > 0\) such that, for any \(z_0 \in \Lambda\) and any two points \(z_1, z_2 \in \Lambda\) with \(z_1 \in W^u_{\varepsilon_0}(z_0)\) and \(z_2 \in W^s_{\varepsilon_0}(z_0)\), the intersection \(W^s_{\varepsilon_0}(z_1) \cap \bigcup_{|t| \le \varepsilon_0} g_t W^u_{\varepsilon_0}(z_2)\) consists of a single point which belongs to \(\Lambda\), and we denote it by \([z_1, z_2]\), often referred to as the local product of \(z_1\) and \(z_2\) \cite{Bow75}. Let us set \(z_4 := [z_1, z_2]\). In other words, there exist unique \(z_3 \in W^u_{\varepsilon_0}(z_2)\) and \(\mathcal{T}(z_1, z_2) \in [-\varepsilon_0, \varepsilon_0]\) such that \(g_{\mathcal{T}(z_1, z_2)}(z_3) = z_4\), which we also illustrated in Figure \ref{Figure 1}.  As a function of \(z_1\) and \(z_2\), we refer to \(\mathcal{T}\) as the temporal distance function, which is defined on \(W^u_{\varepsilon_0}(z_0) \cap \Lambda \times W^s_{\varepsilon_0}(z_0) \cap \Lambda\). Denote by \(\text{range}(\mathcal{T})\) the range of \(\mathcal{T}\). Let \( \mu_\Phi \) denote the Gibbs measure associated with a Hölder continuous potential \( \Phi \) on \( \Lambda \). In \cite{Dol98b}, Dolgopyat shown the following criterion on rapid mixing for hyperbolic flows.

\begin{Theorem}[Dolgopyat \cite{Dol98b}]\label{Theorem 1.1}
If  \(\emph{range}(\mathcal{T})\) has positive lower box dimension, then the hyperbolic flow \(g_t\) is rapidly mixing with respect to \(\mu_{\Phi}\). Specifically, the quantity
$$\int E\circ g_{t}. Fd\mu_{\Phi}-\int Ed\mu_{\Phi}\int Fd\mu_{\Phi}$$
decays to zero faster than any polynomial rate as $t\to\infty$ for any smooth functions $E, F$ on $M$.
\end{Theorem}

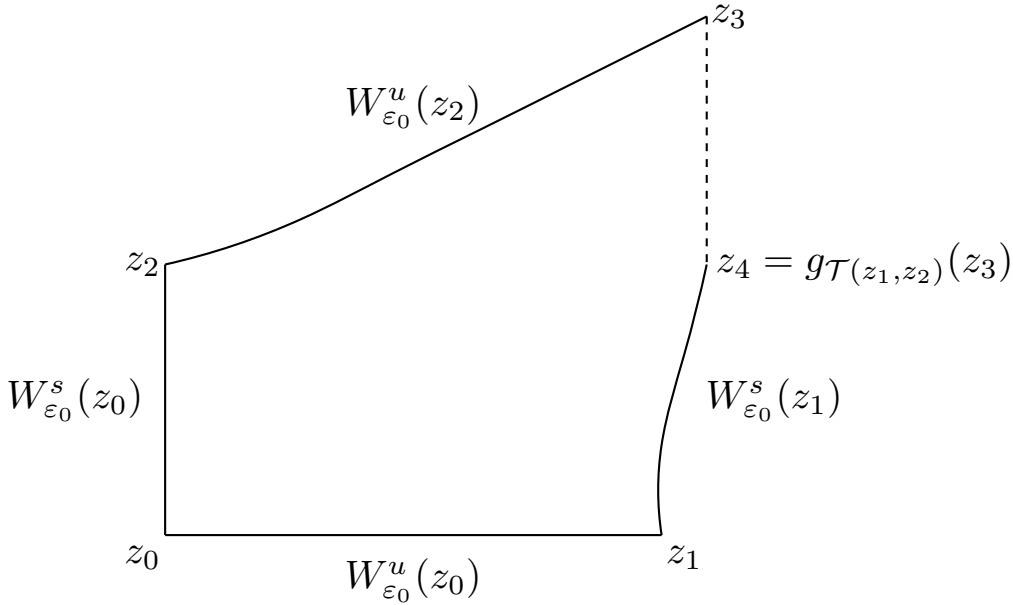
\begin{figure}[h!]\label{Figure 1}
	\centerline{
		\begin{tikzpicture}[thick,scale=0.60, every node/.style={scale=1.5}]
			\draw[-, black] (0,0)-- (0,6);
			\draw[-, black] (0,0)-- (11,0);
			\draw plot [hobby] coordinates {(0,6) (3,7) (5,8) (7,9) (9,10) (11,11) (12,11.5)};
			\draw plot [hobby] coordinates {(11,0) (11,2) (11.5,4) (12,6)};
			\draw[-, dashed] (12,11.5)-- (12,6);
			\node at (-0.5,-0.5) {$z_0$};
			\node at (11.5,-0.5) {$z_1$};
			\node at (5.5,-1) {$W^{u}_{\varepsilon_{0}}(z_{0})$};
			\node at (-2,3) {$W^{s}_{\varepsilon_{0}}(z_{0})$};
			\node at (13.5,3) {$W^{s}_{\varepsilon_{0}}(z_{1})$};
			\node at (5.5,9.5) {$W^{u}_{\varepsilon_{0}}(z_{2})$};
			\node at (-0.5,6) {$z_2$};
			\node at (15.5,6) {$z_4=g_{\mathcal{T}(z_{1},z_{2})}(z_{3})$};
			\node at (12.5,11.5) {$z_3$};
		\end{tikzpicture}
	}
	\caption{The temporal distance $\mathcal{T}(z_{1},z_{2})$ of $z_{1}$ and $z_{2}$ where $z_{4}=[z_{1},z_{2}]$}
\end{figure}

The positive box dimension condition in Theorem \ref{Theorem 1.1} could be satisfied if \(g_t\) is a jointly non-integrable Anosov flow, since in this case \(\text{range}(\mathcal{T})\) contains a non-empty interval. For a hyperbolic flow with \(\Lambda\) has positive box dimension, if the flow preserves a contact  structure, it shown in \cite{Mel09} that \(\text{range}(\mathcal{T})\) also has positive box dimension. However, for general hyperbolic flows, it is not clear how to verify the above dimension condition, and it may  occurs that \(\text{range}(\mathcal{T})\) at most be a countable set (e.g., the example in Subsection \ref{subsec 1.2}). To overcome this problem, we improve the above criterion by reducing the positive box dimension condition to the existence of two Diophantine-related values. Recalling a Diophantine number \( \alpha \in \mathbb{R} \) means that \( |q\alpha - p| \geq C|q|^{-\gamma} \) for some constants \( C > 0 \) and \( \gamma > 0 \), for any \( p \in \mathbb{Z} \) and any nonzero \( q \in \mathbb{Z} \).

\begin{Theorem}\label{Theorem 1.2}
If there exist $\alpha$ and $\beta\in \emph{range}(\mathcal{T})$ such that $\frac{\alpha}{\beta}$ is a Diophantine number, then $g_{t}$ is rapidly mixing with respect to $\mu_{\Phi}$, namely,  for any $n\in\mathbb{N}^{+}$ there exist $C>0$ and $k\in\mathbb{N}^{+}$ such that 
$$\bigg|\int E\circ g_t . F d \mu_{\Phi}  - \int E d \mu_{\Phi} \int Fd \mu_{\Phi}\bigg|\le C||E||_{C^{k}}||F||_{C^{k}}t^{-n},$$
for any $E, F\in C^{k}(M)$ and any $t>0$.
\end{Theorem}

The above Diophantine condition is strictly weaker than the dimension of range$(\mathcal{T})$ is positive. This follows from two observations. One is the set of non-Diophantine numbers is zero-dimensional \cite{Bug05}, and thus any set with positive dimension must contain points $\alpha$ and $\beta$ with $\frac{\alpha}{\beta}$ is a Diophantine number. Another is that in Subsection \ref{subsec 1.2} we construct explicit examples illustrate the corresponding range$(\mathcal{T})$ is zero-dimensional but the Diophantine condition in Theorem \ref{Theorem 1.2} is satisfied.

Under the same assumption as in Theorem \ref{Theorem 1.2}, following the argument in \cite{Pol01}, we can also obtain a polynomial error term in the Prime Orbit Theorem. Let \(\tau\) represents a closed orbit of \(g_t\), and denote its period as \(\ell_\tau\). For any \(T > 0\), let \(\pi(T)\) be the collection of prime closed orbits \(\tau\) with \(\ell_\tau \leq T\).

\begin{Theorem}\label{Theorem 1.3}
If there exist $\alpha$ and $\beta\in \emph{range}(\mathcal{T})$ such that $\frac{\alpha}{\beta}$ is a Diophantine number, then there exist $C>0$ and $\delta>0$ such that for any $T>0$,
$$
\big|\#\pi(T)-\emph{li}(e^{hT})\big|\le C\frac{e^{hT}}{{T^{1+\delta}}}, 
$$
where $h$ denotes the topological entropy of $g_{t}$ and $\emph{li}(T)=\int^{T}_{2}\frac{1}{\log u}du$.
\end{Theorem}

In addition to the rapid mixing result in Theorem~\ref{Theorem 1.2} and the counting result in Theorem~\ref{Theorem 1.3}, we can also apply the results in \cite{Mel02} to obtain a class of limit theorems. These include the central limit theorem, the law of the iterated logarithm, and the almost sure invariance principle for the time-one map of the hyperbolic flow.

\subsection{Examples}\label{subsec 1.2}

We now present two classes of hyperbolic flows that satisfy the Diophantine condition in Theorem~\ref{Theorem 1.2} but fail to meet previous requirements for rapid mixing. These examples demonstrate the broader applicability of our results.

Our first class consists of suspension flows over hyperbolic diffeomorphisms. To express more precise, we now introduce some notation. Let \( M \) be a compact smooth Riemannian manifold, and let \( f: \Lambda \to \Lambda \subset M \) be a \( C^\infty \) hyperbolic diffeomorphism. We assume that $\Lambda\subset M$ is a cantor set. Consider a $C^{\infty}$ positive function $r:M\to \mathbb{R}$. We define the suspension space $\Lambda_{r}=\{(x,u):0\le u\le r(x)\}\sim$ where $(x,r(x))\sim (f(x),0)$, and the suspension flow $\phi_{t}$ of $f$ under $r$ by
$$\phi_{t}:\Lambda_{r}\to\Lambda_{r},\quad \phi_{t}(x,u)=(x,u+t),$$
with respect to $\sim$ on $\Lambda_{r}$. It is standard that $\phi_{t}:\Lambda_{r}\to\Lambda_{r}$ is a hyperbolic flow. Let $W^{s}_{f}$ and $W^{u}_{f}$ be the stable and unstable manifolds of $f$ respectively, and also denote by $W^{s}_{g_{t}}$ and $W^{u}_{g_{t}}$ the stable and unstable manifolds of $g_{t}$ respectively. They have the following relationship.

\begin{Lemma}\label{Lemma 1.4}
For any $(x,u)\in\Lambda_{r}$, we have $W^{s}_{g_{t}}(x,u)=\{(y,u+\Delta^{s}(x,y)): y\in W^{s}_{f}(x)\}$ and $W^{u}_{g_{t}}(x,u)=\{(y,u+\Delta^{u}(x,y)): y\in W^{u}_{f}(x)\}$, where 
$$\Delta^{s}(x,y)=\sum_{n=0}^{\infty}r(f^{n}(y))-r(f^{n}(x))\quad\text{and}\quad\Delta^{u}(x,y)=\sum_{n=1}^{\infty}r(f^{-n}(x))-r(f^{-n}(y)).$$
\end{Lemma}
\begin{proof}
Assume $(y,u^{\prime})\in W^{s}_{g_{t}}(x,u)$, we then have 
$$
\begin{aligned}
0=&\lim_{t\to+\infty}d(\phi_{t}(x,u),\phi_{t}(y,u^{\prime}))\\
=&\lim_{t\to+\infty}d((x,u+t),(y,u^{\prime}+t))\\
=&\lim_{t\to+\infty}d((f^{n}(x),u+t-r_{n}(x)),(f^{n}(y),u^{\prime}+t-r_{n}(y))),
\end{aligned}
$$
where $r_{n}=\sum_{i=0}^{n-1}r\circ f^{i}$. From which we deduce that $\lim_{n\to\infty}d(f^{n}(x),f^{n}(y))=0$ and $u^{\prime}=u+\sum_{n=0}^{\infty}r(f^{n}(y))-r(f^{n}(x))$. Thus the case  of $W^{s}_{g_{t}}(x,u)$ is proved. The proof of the second case of $W^{u}_{g_{t}}(x,u)$ is similar.
\end{proof}

In particular, given $(x_{0},u_{0})\in\Lambda_{r}$, $(x_{1},u_{1})\in W^{s}_{g_{t}}(x_{0},u_{0})\cap\Lambda_{r}$ and $(x_{2},u_{2})\in W^{u}_{g_{t}}(x_{0},u_{0})\cap\Lambda_{r}$ with $x_{1},x_{2}\in B(x_{0},\delta)$, by Lemma \ref{Lemma 1.4}, we have
$$\mathcal{T}((x_{1},u_{1}),(x_{2},u_{2}))=\Delta^{s}(x_{0},x_{1})+\Delta^{u}(x_{1},x_{3})+\Delta^{s}(x_{3},x_{2})+\Delta^{u}(x_{2},x_{0}),$$
where $x_{3}=W^{u}_{f}(x_{1})\cap W^{s}_{f}(x_{2})\cap B(x_{0},\delta)$. Thus, we obtain that 
$$
\text{range}(\mathcal{T})=\{\mathcal{H}(x_{1},x_{2}): x_{1}\in W^{s}_{f}(x_{0})\cap B(x_{0},\delta),\ x_{2}\in W^{u}_{f}(x_{0})\cap B(x_{0},\delta)\},
$$
where $\mathcal{H}(x_{1},x_{2})=\Delta^{s}(x_{0},x_{1})+\Delta^{u}(x_{1},x_{3})+\Delta^{s}(x_{3},x_{2})+\Delta^{u}(x_{2},x_{0})$.

\begin{Theorem}\label{Theorem 1.5}
For any $\alpha,\beta\in\mathbb{R}$, we can choose our roof function $r:\Lambda\to\mathbb{R}$ such that $\emph{range}(\mathcal{T})=\{0,\pm\alpha,\pm\beta\}$.
\end{Theorem}
\begin{proof}
Following the preceding analysis, for any $x_{1}\in W^{s}_{f}(x_{0})\cap B(x_{0},\delta)\cap\Lambda_{r}$ and $x_{2}\in W^{u}_{f}(x_{0})\cap B(x_{0},\delta)\cap\Lambda_{r}$, we can write
$$
\begin{aligned}
\mathcal{H}(x_{1},x_{2})=&\Delta^{s}(x_{0},x_{1})+\Delta^{u}(x_{1},x_{3})+\Delta^{s}(x_{3},x_{2})+\Delta^{u}(x_{2},x_{0})\\
=&\Delta^{s}(x_{0},x_{1})+\Delta^{u}(x_{1},x_{3})+\Delta^{u}(x_{2},x_{0})+\sum_{n=1}r(f^{n}(x_{2}))-r(f^{n}(x_{3}))+r(x_{2})-r(x_{3})\\
=&\mathcal{G}(x_{1},x_{2})-r(x_{3}),
\end{aligned}
$$
where $\mathcal{G}(x_{1},x_{2})=\Delta^{s}(x_{0},x_{1})+\Delta^{u}(x_{1},x_{3})+\Delta^{u}(x_{2},x_{0})+\sum_{n=1}r(f^{n}(x_{2}))-r(f^{n}(x_{3}))+r(x_{2})$ is independent of the choice of $r(x_{3})$. The key observation is that for any pair $(x_{1},x_{2})$, since $\Lambda$ is a cantor set, we can freely adjust the value $r(x_{3})$ to ensure $\mathcal{H}(x_{1},x_{2})$ attains any desired value in $\{0,\pm\alpha,\pm\beta\}$. This completes the proof.
\end{proof}

Now, by the above theorem, we could choose $\alpha$ and $\beta$ such that $\frac{\alpha}{\beta}$ is a Diophantine number. Meanwhile, it is obvious that the dimension of $\text{range}(\mathcal{T})$ is zero.

Our second class consists of carefully constructed perturbations of contact hyperbolic flows. Consider a hyperbolic flow $g_t: \Lambda \to \Lambda$ on a $(2n+1)$-dimensional manifold $M$ preserving a contact form $\varrho$ (i.e., $\varrho \wedge (d\varrho)^n$ is nowhere vanishing). It is proved in \cite{Mel09} that for any contact hyperbolic flow, the dimension of $\text{range}(\mathcal{T})$ is positive. This fundamental result allows us to select $\alpha, \beta \in \text{range}(\mathcal{T})$ with $\alpha/\beta$ Diophantine. By definition, the values $\alpha$ only depends on four points in $\Lambda$. Thus, we could perturb the flow $g_{t}$ in a certain way that the temporal distance with respect to the perturbed flow $f_{t}:\Lambda\to\Lambda$ associated to these four points unchanged. For instance, the perturbed flow $f_{t}$ unchanged in a neighborhood of these four points. The same argument applied to the values $\beta$. Consequently, we have $\alpha$ and $ \beta$ still belong to $\text{range}(\mathcal{T})$ of the perturbed flow $f_{t}$, and thus $f_{t}$ satisfies the Diophantine condition in Theorem \ref{Theorem 1.2}. However, it doesn't have to be a contact flow and the dimension of $\text{range}(\mathcal{T})$ doesn't have to be positive since contact  structures are unstable. 

\subsection{Outline of the paper}\label{subsec 1.3}

The proofs of Theorems \ref{Theorem 1.2} and \ref{Theorem 1.3} follow the classical approach of Markov partitions and symbolic dynamics. This paper is organized as follows. In Section \ref{sec 2}, we recall basic definitions and notations of hyperbolic flows and symbolic dynamics. In Section \ref{sec 3}, via a Markov partition, we will deduce Theorems \ref{Theorem 1.2} and \ref{Theorem 1.3} to a Dolgopyat type estimate (Proposition \ref{Dolgopyat type estimate}) for the associated transfer operators. The argument is now classic and originated from \cite{Pol85}. In Section \ref{sec 4}, we prove Proposition \ref{Dolgopyat type estimate}, and the main ingredient in its proof is a cancellation estimate stated in Lemma \ref{Lemma 4.4}, whose proof will be given in Section \ref{sec 5}. The proof of Lemma \ref{Lemma 4.4} will use the conclusion obtained in Lemma \ref{Lemma 3.5}, which follows from the Diophantine assumption in Theorem \ref{Theorem 1.2}. 

\bigskip

\noindent\textbf{Acknowledgements}. The author expresses sincere gratitude to Ian Melbourne for his insightful comments and corrections, as well as to Paulo Varandas for his valuable suggestions and comments.

\section{Hyperbolic flows and symbolic dynamics}\label{sec 2}

This section establishes the fundamental concepts and notation for our analysis. We begin by recalling the definition of hyperbolic flows and their key dynamical objects. Following this, we review the construction of suspensions of subshifts of finite type, which serve as symbolic models for hyperbolic flows. 

\subsection{Hyperbolic flows}\label{subsec 2.1}

Let $M$ be a smooth compact Riemannian manifold, and let $TM$ be its tangent space. Let $g_{t}:M\to M$ be a $C^{1}$ flow, and let $\Lambda\subset M$ be a compact $g_{t}$-invariant set. We assume that $\Lambda$ consists of more than a single closed orbit, and closed orbits of $g_{t}$ in $\Lambda$ are dense. In simple terms, the flow $g_{t}:\Lambda\to \Lambda$ is called a hyperbolic flow if we can express $T_\Lambda M = E^c \oplus E^s \oplus E^u$, where $E^c$ is the line bundle of the flow, and $E^s$, $E^u$ are subspaces that remain invariant under the differential $Dg_t$ with $E^s$ contracting and $E^u$ expanding \cite{Bow75}, namely, there exist $C>0$ and $\delta>0$ such that $||Dg_{t}v||\le Ce^{-\delta t}||v||$ for all $v\in E^{s}$ and $t\ge0$;  $||Dg_{-t}v||\le Ce^{-\delta t}||v||$ for all $v\in E^{u}$ and $t\ge0$. Given a Hölder potential $\Phi$ on $\Lambda$, which is a Hölder continuous real-valued function, we can associate with it a $g_t$-invariant probability measure $\mu_{\Phi}$ called the Gibbs measure of $\Phi$ \cite{Bow75}. We are interested in the asymptotic behaviour of the following quantity.

\begin{Definition}\label{Def 2.1}
For $E,F : \Lambda \to \mathbb C$, their correlation function with respect to $\mu_{\Phi}$ is defined by 
$$
\rho_{E,F}(t): = \int_{\Lambda} E\circ g_t. Fd \mu_{\Phi}  - \int_{\Lambda} E d\mu_{\Phi} \int_{\Lambda} F d\mu_{\Phi}, \hbox{ for } t \in \mathbb R.
$$
\end{Definition}

The flow $g_{t}:  \Lambda \to\Lambda$ is said to be mixing with respect to  $\mu_{\Phi}$ if  $\rho_{E,F}(t) \to 0$ as $t \to +\infty$ for any $E$ and $F\in L^{2}(\mu_{\Phi})$. One of the most significant aspects of the study of the hyperbolic flow $g_{t}$ is the mixing rate (i.e., the rate of decay of $\rho_{E,F}(t)$) with respect to the Gibbs measure $\mu_{\Phi}$. It is important to recognize that $g_{t}$ is not always mixing with respect to $\mu_{\Phi}$. For example, $g_{t}$ is a constant suspension of a hyperbolic diffeomorphism. There are also examples where mixing occurs at an arbitrarily slow rate \cite{Pol85}. In this paper, we focus on a decay rate of $\rho_{E,F}(t)$ of the the following form.

\begin{Definition}\label{Def 2.2}
We say that the flow $g_{t}:\Lambda\to\Lambda$ is rapidly mixing with respect to $\mu_{\Phi}$ if for any $n\in\mathbb{N}^{+}$ there exist $C>0$ and $k\in\mathbb{N}^{+}$ such that for any $E, F\in C^{k}(M)$ and any $t>0$, we have $|\rho_{E,F}(t)| \leq C||E||_{C^{k}}||F||_{C^{k}}t^{-n}$.
\end{Definition}

From the definition, rapid mixing describes that the correlation functions of smooth functions decay faster than any polynomial and is thus also called superpolynomial mixing. It's proven in \cite{Dol98b} that if there exists \(z_0 \in \Lambda\) such that the range of the temporal distance function has positive lower box dimension, then the hyperbolic flow \(g_t\) is rapidly mixing with respect to \(\mu_{\Phi}\). In this paper, we improve this criterion by weakening the requirement on the temporal distance function from positive box dimension to the existence of two values whose ratio is Diophantine. Thus, we end this subsection by recalling the temporal distance function.

To begin, let's recall the stable and unstable manifolds. The subbundles \(E^s\) and \(E^u\) are always integrable. The integral manifolds of \(E^s\) and \(E^u\) are termed the stable and unstable manifolds, denoted by \(W^s\) and \(W^u\), respectively. They are \(g_t\)-invariant foliations. Given \(\varepsilon > 0\) small, the local stable manifold of \(x\) of size \(\varepsilon\) is defined by \(W^s_{\varepsilon}(x) = \{ y \in W^s(x) : \sup_{t \ge 0} d(g_t(y), g_t(x)) \le \varepsilon \}\). Similarly, the local unstable manifold of \(x\) of size \(\varepsilon\) is defined by \(W^u_{\varepsilon}(x) = \{ y \in W^u(x) : \sup_{t \ge 0} d(g_{-t}(y), g_{-t}(x)) \le \varepsilon \}\). When we don't need to emphasize the size, we will write them as \(W^s_{\text{loc}}\) and \(W^u_{\text{loc}}\). There exists \(\varepsilon_0 > 0\) such that, for any \(z_0 \in \Lambda\) and any two points \(z_1, z_2 \in \Lambda\) with \(z_1 \in W^u_{\varepsilon_0}(z_0)\) and \(z_2 \in W^s_{\varepsilon_0}(z_0)\), the intersection \(W^s_{\varepsilon_0}(z_1) \cap \bigcup_{|t| \le \varepsilon_0} g_t W^u_{\varepsilon_0}(z_2)\) consists of a single point which belongs to \(\Lambda\), and we denote it by \([z_1, z_2]\), often referred to as the local product of \(z_1\) and \(z_2\) \cite{Bow75}. Let's set \(z_4 := [z_1, z_2]\). In other words, there exist unique \(z_3 \in W^u_{\varepsilon_0}(z_2)\) and \(\mathcal{T}(z_1, z_2) \in [-\varepsilon_0, \varepsilon_0]\) such that \(g_{\mathcal{T}(z_1, z_2)}(z_3) = z_4\). The unique value \(\mathcal{T}(z_1, z_2)\) is called the temporal distance of \(z_1\) and \(z_2\). As a function of \(z_1\) and \(z_2\), we refer to \(\mathcal{T}\) as the temporal distance function, which is defined on \(W^u_{\varepsilon_0}(z_0) \cap \Lambda \times W^s_{\varepsilon_0}(z_0) \cap \Lambda\).

\subsection{Symbolic dynamics}\label{subsec 2.2}

The hyperbolic flow $g_{t}:\Lambda\to\Lambda$ can be modeled by a suspension flow of a two-sided subshift of finite type \cite{Bow73}. Below, we will briefly introduce these objects with \cite{Par90} serving as a helpful reference. 

Let $A$ be an $N\times N$ matrix of zeros and ones. We assume $A$ is aperiodic, meaning that some power of $A$ is a positive matrix. Let $X := X_{A}$ be the two-sided symbolic space associated with $A$, defined as $X = \{x=(x_{i})_{i=-\infty}^{\infty}\in\{1,\cdots,N\}^{\mathbb{Z}}: A(x_{i}, x_{i+1}) = 1\}$, and let $\sigma:X\to X$ be the two-sided subshift, defined as $(\sigma(x))_{i} = x_{i+1}$. Given $\lambda \in (0, 1)$, we can define a metric $d_{\lambda}$ on $X$ by $d_{\lambda}(x, y) := \lambda^{N(x,y)}$, where $N(x,y) = \min\{|i| : x_{i} \neq y_{i}\}$. For each $n\in\mathbb{N}^{+}$, the $n$-cylinders are sets of the form $[x_{-n+1}\cdots x_{0}\cdots x_{n-1}]_{n} = \{y \in X : y_{i} = x_{i}, |i| \le n-1\}$. Denote by $F_{\lambda}(X)$ the Banach space of all Lipschitz continuous complex-valued functions on $(X,d_{\lambda})$ with respect to the Lipschitz norm $||\cdot||_{\text{Lip}}:=|\cdot|_{\text{Lip}} + |\cdot|_{\infty}$, where $|\cdot|_{\text{Lip}}$ is the Lipschitz semi-norm and $|\cdot|_{\infty}$ is the supremum norm. Let \(F_{\lambda}(X,\mathbb{R})\) be the set of real-valued functions in \(F_{\lambda}(X,)\). All of the above objects can be similarly defined for a one-sided subshift of finite type \(\sigma:X^{+}\to X^{+}\), where \(X^{+}=\{x=(x_{i})_{i\ge0}\in\{1,\ldots,N\}^{\mathbb{N}} : A(x_{i}, x_{i+1})=1\}\), using the same notations but replacing \(X\) with \(X^{+}\).

Given a roof function $r\in F_{\lambda}(X,\mathbb{R}^{+})$, we define the suspension space as $X_{r}:=\{(x,u):0\le u\le r(x)\}/\sim$, where $(x,r(x))\sim(\sigma(x),0)$. Let $\phi_{t}:X_{r}\to X_{r}$
be the suspension flow, namely,
$$
\phi_{t}:X_{r}\to X_{r},\quad \phi_{t}(x,u)=(x, u+t),\quad t\in\mathbb{R},
$$
with respect to $\sim$ on $X_{r}$. For each $n\in\mathbb{Z}$ and any $x\in X$, we define 
$$W^{s}_{n}(x):=\{y\in X:y_{i}=x_{i},\ \forall i\ge n\}\quad\text{and}\quad W^{u}_{n}(x):=\{y\in X:y_{i}=x_{i},\ \forall i\le-n\}.$$
Let $W^{s}(x)=\bigcup_{n\in\mathbb{Z}}W^{s}_{n}(x)$ and $W^{u}(x)=\bigcup_{n\in\mathbb{Z}}W^{u}_{n}(x)$. Then $W^{s}(x)=\{y\in X:d_{\lambda}(\sigma^{n}(x),\sigma^{n}(y))\to0\text{ as }n\to\infty\}$ and  $W^{u}(x)=\{y\in X:d_{\lambda}(\sigma^{-n}(x),\sigma^{-n}(y))\to0\text{ as }n\to\infty\}$. For each $n\in\mathbb{N}^{+}$, let $r_{n}=\sum_{i=0}^{n-1}r\circ\sigma^{i}$.

\begin{Definition}\label{Def 2.3}
	For any $x,y\in X$ with $y\in W^{s}(x)$, we define $\Delta^{s}(x,y):=\lim_{n\to\infty} r_{n}(y)-r_{n}(x)$. Similarly, for any $x,y\in X$ with $y\in W^{u}(x)$, we define $\Delta^{u}(x,y):=\lim_{n\to\infty} r_{n}(\sigma^{-n}(x))-r_{n}(\sigma^{-n}(y))$. 
\end{Definition}

One should note that $\Delta^{s}(x,y)=-\Delta^{s}(y,x)$ as well as $\Delta^{u}(x,y)=-\Delta^{u}(y,x)$. We need to estimate the convergence rate of the limit $\Delta^{u}$.

\begin{Lemma}\label{Lemma 2.4}
	There exists $C_{1}>0$ such that for any $k\in\mathbb{N}^{+}$, any $n\in\mathbb{N}$ and any $x,y\in X$ with $y\in W^{u}_{n}(x)$,
	$$|\Delta^{u}(x,y)-[r_{k}(\sigma^{-k}(x))-r_{k}(\sigma^{-k}(y))]|\le C_{1}\lambda^{k-n}.$$
\end{Lemma}
\begin{proof}
	We compute,
	$$
	\begin{aligned}
		&|\Delta^{u}(x,y)-[r_{k}(\sigma^{-k}(x))-r_{k}(\sigma^{-k}(y))]|\\
		=&\bigg|\sum_{i=1}^{\infty}[r(\sigma^{-i}(x))-r(\sigma^{-i}(y))]-\sum_{i=1}^{k}[r(\sigma^{-i}(x))-r(\sigma^{-i}(y))]\bigg|\\
		=&\bigg|\sum_{i=k+1}^{\infty}[r(\sigma^{-i}(x))-r(\sigma^{-i}(y))]\bigg|\\
		\le&|r|_{Lip}\lambda^{k-n}\dfrac{1}{1-\lambda}.
	\end{aligned}
	$$
	Therefore, we can set $C_{1}=|r|_{Lip}\frac{1}{1-\lambda}$ to complete the proof. 
\end{proof}

\section{A Dolgopyat type estimate}\label{sec 3}

In this section, we prove Theorems~\ref{Theorem 1.2} and~\ref{Theorem 1.3} under the assumption of Proposition~\ref{Dolgopyat type estimate}, which provides a crucial Dolgopyat type estimate for the associated transfer operators. In Subsection~\ref{subsec 3.1}, we recall Bowen's classical result establishing a semi-conjugacy between the hyperbolic flow and a suspension flow over a two-sided subshift of finite type (introduced in Subsection~\ref{subsec 2.2}). Subsection~\ref{subsec 3.2} demonstrates how the Diophantine condition in Theorem~\ref{Theorem 1.2} induces a corresponding Diophantine property for the suspension flow (Lemma~\ref{Lemma 3.5}). Finally, under this Diophantine property, we state a Dolgopyat type estimate (Proposition~\ref{Dolgopyat type estimate}), whose proof will be developed in Sections~\ref{sec 4} and~\ref{sec 5}. The proofs of Theorems~\ref{Theorem 1.2} and~\ref{Theorem 1.3} then follow through standard arguments presented at the end of this section.

\subsection{Markov partitions}\label{subsec 3.1}

A Markov section of a hyperbolic flow allows us to build a bridge between the suspension flow introduced in the previous subsection 
and the hyperbolic flow. Fix a hyperbolic flow $g_{t}:\Lambda\to\Lambda$. For a sufficiently small $\delta>0$, recall the local product $[x,y]$ of two points $x,y\in\Lambda$ with $d(x,y)<\delta$. A subset $B\subset W^{s}_{\delta}(z)\cap\Lambda$ is called proper if it is closed and $\overline{B^{o}}=B$ with respect to the induced topology on $W^{s}_{\delta}(z)\cap\Lambda$. A proper subset in $W^{u}_{\delta}(z)\cap\Lambda$ is defined similarly.

\begin{Definition}
	A \emph{parallelogram} $R$ is a set of the form $R=[U,S]$ where $U\subset W^{u}_{\delta}(z)\cap\Lambda$ and $S\subset W^{s}_{\delta}(z)\cap\Lambda$ are proper subsets.
\end{Definition}

By definition, a parallelogram $R$ is a local cross section of the flow $g_{t}$. A parallelogram $R$ has disintegration with leaves $[x,S]$, namely $R=\cup_{x\in U}[x,S]$. Obviously, $[x,S]\subset W^{s}_{\delta}(x)\cap\Lambda$. Thus, we can  denote $[x,S]$ as $W^{s}(x,R)$. We denote by $W^{u}(x,R) = [U,x]$ where $x\in S$.

Let $\{R_{i}\}_{i=1}^{N}$ be finitely many parallelograms with $R_{i}\cap R_{j}=\emptyset$ for $i\not=j$. Fix a small $\varepsilon>0$, we assume $\cup_{i}\cup_{|t|\le\varepsilon}g_{t}R_{i}=\Lambda$. Let $R=\cup_{i}R_{i}$ and $P:R\to R$ correspond to the first return map of $g_{t}$ on $R$.

\begin{Definition}
	$\{R_{i}\}_{i=1}^{N}$ is called a Markov section of $g_{t}$ if whenever $x\in R_{i}^{o}\cap P^{-1}R_{j}^{o}\not=\emptyset$, 
	$$
	PW^{s}(x,R_{i})^{o}\subset W^{s}(P(x),R_{j})^{o}\quad\text{and}\quad P^{-1}W^{u}(P(x),R_{j})^{o}\subset W^{u}(x,R_{i})^{o}.
	$$
\end{Definition}

Given a Markov section $\{R_{i}\}_{i=1}^{N}$, let $r:R^{o} \cap  P^{-1}R^{o}\to\mathbb{R}^{+}$ be the first return time function for $P: R^{o} \cap  P^{-1}R^{o} \mapsto R^{o}$. 
We call the set $R_{i}^{r}:=
\overline{\cup_{x\in R^{o} \cap  P^{-1}R^{o}}\cup_{0\le t< r(x)}g_{t}(x)}$ a parallelepiped, and we call $\{R_{i}^{r}\}_{i=1}^{N}$ a Markov partition. The maximal value of the sizes of $R_{i}^{r}$, $1\le i\le N$, is called the size of $\{R_{i}^{r}\}_{i=1}^{N}$. The following well-known result is attributed to Bowen \cite{Bow73}.

\begin{Lemma}\label{Markov partition of hyperbolic flows}
	There exists a Markov partition $\{R_{i}^{r}\}_{i=1}^{N}$ of $g_{t}:\Lambda\to\Lambda$ of arbitrarily small size.
\end{Lemma}

We can naturally view $\Lambda$ as the suspension space of $R$ under the function $r$ and view $g_{t}$ is the corresponding suspension flow of $P:R\to R$ and $r$. We can directly study the dynamics of $P$ and the suspension flow. But, it may be more convenient to view $R$ as a two-sided symbolic space and view $P:R\to R$ as a two-sided subshift. To this end, we can define a transition $N\times N$ matrix $A$ of zeros and ones by
$$
A(i,j)=\begin{cases}
	1,\quad & \text{if}\ R^{o}_{i}\cap P^{-1}R_{j}^{o}\not=0;\\
	0,\quad &\text{otherwise.}
\end{cases} 
$$
We can always assume $A$ is aperiodic, that is $A^{N}>0$ for some $N\in\mathbb{N}^{+}$. Let $\sigma:X\to X$ be the two-sided subshift. By \cite{Bow73}, we have the following result.

\begin{Lemma}\label{Semi-conjugacy of poincare map and subshifts}
	There exists a Lipschitz continuous surjection $\Pi:X\to R$ such that $P:R\to R$ and $\sigma:X\to X$ are semi-conjugated ,i.e. $\Pi\circ\sigma=P\circ\Pi$.
\end{Lemma}

By considering $r\circ\Pi$, we can think of $r$ is a roof function on $X$, and $r$ only depends on future coordinates, i.e. $r(x)=r(y)$ whenever $x_{i}=y_{i}$ for any $i\ge0$. By choosing the metric constant $\lambda\in(0,1)$ on $X$ close sufficiently to 1, we can assume $r$ belongs to $F_{\lambda}(X,\mathbb{R})$. Let $\phi_{t}:X_{r}\to X_{r}$ be the suspension flow of $\sigma$ under $r$ introduced in Subsection \ref{subsec 2.2}. The suspension space $X_{r}$ is a symbolic coordinate system of $\Lambda$ via $\Pi_{r}:X_{r}\to\Lambda$ defined by $\Pi_{r}(x,u)=g_{u}(\Pi(x))$, and $\phi_{t}$ is semi-conjugated to $g_{t}$ by $\Pi_{r}$.

\subsection{Deducing the Diophantine condition}\label{subsec 3.2}

From now on, we assume hyperbolic flow $g_{t}$ satisfies the Diophntine assumption in Theorem \ref{Theorem 1.2}. In particular, this ensures the following conclusion on the suspension flow $\phi_{t}$ which will be used in the proof of the Dolgopyat type estimate in Proposition \ref{Dolgopyat type estimate}.

\begin{Lemma}\label{Lemma 3.5}
There exist $C_{2}>0$, $C_{3}>0$, $C_{4}>0$, $w_{0}\in X$ and $N_{0}\in\mathbb{N}^{+}$ such that for any $|b|\ge C_{4}$ there exist $w_{1}, w_{2}, w_{3}\in X$ with $w_{1}\in W^{u}_{N_{0}}(w_{0})$, $w_{2}\in W^{s}_{N_{0}}(w_{0})$ and $w_{3}\in W^{u}_{N_{0}}(w_{2})\cap W^{s}_{N_{0}}(w_{1})$ such that $$\big|e^{ib(\Delta^{u}(w_{0},w_{1})+\Delta^{s}(w_{1},w_{3})+\Delta^{u}(w_{3},w_{2})+\Delta^{s}(w_{2},w_{0}))}-1\big|\ge C_{3} |b|^{-C_{2}}.$$
\end{Lemma}
\begin{proof}
Fix a point $z_{0}\in\Lambda$. By the Diophantine assumption of Theorem \ref{Theorem 1.2}, there exist $\alpha, \beta\in \text{range}(\mathcal{T})$, $C_{5}>0$ and $C_{6}>0$ such that $|q\frac{\alpha}{\beta}-p|\ge C_{6}|q|^{-C_{5}}$ for any $p\in\mathbb{Z}$ and any $0\not=q\in\mathbb{Z}$. We claim that for any $|b|\ge C_{4}$ we have $|e^{ib\alpha}-1|\ge 2^{-1}C_{6}|b|^{-C_{5}}$ or $|e^{ib\beta}-1|\ge 2^{-1}C_{6}|b|^{-C_{5}}$. Indeed, if not we then obtain that for some $|b|\ge C_{4}$, 
\begin{equation}\label{3.1}
|e^{ib\alpha}-1|\le 2^{-1}C_{6}|b|^{-C_{5}}\quad\text{and}\quad|e^{ib\beta}-1|\le 2^{-1}C_{6}|b|^{-C_{5}}.
\end{equation}
We can choose $n_{\alpha}, n_{\beta}\in\mathbb{Z}$ such that
$$d(b\alpha,2\pi\mathbb{Z})=\min_{k\in\mathbb{Z}}\{|b\alpha-2\pi k|\}=b\alpha-2\pi n_{\alpha}\quad\text{and}\quad d(b\beta,2\pi\mathbb{Z})=\min_{k\in\mathbb{Z}}\{|b\beta
-2\pi k|\}=b\beta-2\pi n_{\beta}.$$
Since $\beta\not=0$, we can choose $C_{4}>0$ to be large sufficiently such that $|b\beta|\ge 2\pi$ and therefore $n_{\beta}\not=0$. By \eqref{3.1}, we have 
\begin{equation*}
	|b\alpha-2\pi n_{\alpha}|\le 2^{-1}C_{6}|b|^{-C_{5}}\quad\text{and}\quad|b\beta-2\pi n_{\beta}|\le 2^{-1}C_{6}|b|^{-C_{5}}.
\end{equation*}
In particular, the above implies $|n_{\beta}\frac{\alpha}{\beta}-n_{\alpha}|\le C_{6}|b|^{-C_{5}}$. Note that $|n_{\beta}|\le \frac{\beta+1}{2\pi}|b|$. Thus, $|n_{\beta}\frac{\alpha}{\beta}-n_{\alpha}|\le C_{6}|n_{\beta}|^{-C_{5}}$ which contradicts the fact that $\frac{\alpha}{\beta}$ is a Diophantine number.

Now, without loss of generality, we assume $|e^{ib\alpha}-1|\ge 2^{-1}C_{6}|b|^{-C_{5}}$. By definition, there exist $z_{1}\in W^{u}_{\varepsilon_{0}}(z_{0})$, $z_{2}\in W^{s}_{\varepsilon_{0}}(z_{0})$, $z_{3}\in W^{u}_{\varepsilon_{0}}(z_{2})$ and $z_{4}\in W^{s}_{\varepsilon_{0}}(z_{1})$ such that $g_{\mathcal{T}(z_{1},z_{2})}(z_{3})=z_{4}$ and $\mathcal{T}(z_{1},z_{2})=\alpha$, where $\mathcal{T}(z_{1},z_{2})$ is the temporal distance of $z_{1}$ and $z_{2}$. Assume $z_{0}, z_{1}, z_{2} $ and $z_{3}$ have symbolic coordinates $(w_{0},u_{0}), (w_{1},u_{1}), (w_{2},u_{2})$ and $ (w_{3},u_{3})\in X_{r}$ respectively. Then, $z_{4}$ has symbolic coordinate $(w_{3},u_{3}+\alpha)$ and we have the following identities:
$$u_{1}=u_{0}+\Delta^{u}(w_{0},w_{1}),\ u_{2}=u_{0}+\Delta^{s}(w_{0},w_{2}),\ u_{3}=u_{2}+\Delta^{u}(w_{2},w_{3})\text{ and } u_{3}+\alpha=u_{1}+\Delta^{s}(w_{1},w_{3}).$$
The above implies that $\Delta^{s}(w_{0},w_{2})+\Delta^{u}(w_{2},w_{3})+\alpha=\Delta^{u}(w_{0},w_{1})+\Delta^{s}(w_{1},w_{3})$. In particular, $\alpha=\Delta^{u}(w_{0},w_{1})+\Delta^{s}(w_{1},w_{3})-\Delta^{u}(w_{2},w_{3})-\Delta^{s}(w_{0},w_{2})=\Delta^{u}(w_{0},w_{1})+\Delta^{s}(w_{1},w_{3})+\Delta^{u}(w_{3},w_{2})+\Delta^{s}(w_{2},w_{0})$ which completes the proof.
\end{proof}

\subsection{Transfer operators and a Dolgopyat type estimate}\label{subsec 3.3}

The potential $\Phi$ on $\Lambda$ induces a potential \(\varphi\in F_{\lambda}(X,\mathbb{R})\). By adding a coboundary \(h\circ\sigma-h\) to $\varphi$ if necessary, we can also assume \(\varphi\) depends only on future coordinates and thus \(\varphi\in F_{\lambda}(X^{+},\mathbb{R})\).  Then, it is well-known \cite{Bow08} that there exists a unique equilibrium state of \(\varphi\) on \(X^{+}\), which is called the Gibbs measure of \(\varphi\) on \(X^{+}\), denoted as \(\mu\). For each \(n\in\mathbb{N}^{+}\), let \(\varphi_{n}=\sum_{i=0}^{n-1}\varphi\circ\sigma^{i}\). The Gibbs measure \(\mu\) satisfies the following Gibbs property, see also \cite{Bow75}.

\begin{Lemma}\label{Gibbs property}
	There exists $C_{7}\geq 1$ such that 
	$$C_{7}^{-1}\le \dfrac{\mu[x_{0}\cdots x_{n-1}]_{n}}{e^{\varphi_{n}(x)-nP(\varphi)}}\le C_{7},$$
	for any $n\in\mathbb{N}^{+}$ and any $x\in X^{+}$ where $P(\varphi)$ is the pressure of $\varphi$.
\end{Lemma}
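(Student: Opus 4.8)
The plan is to derive this from the Ruelle--Perron--Frobenius (RPF) theorem for the transfer operator. Since $\varphi\in F_{\lambda}(X^{+},\mathbb{R})$ and $A$ is aperiodic, so $\sigma:X^{+}\to X^{+}$ is topologically mixing, the operator $(\mathcal{L}_{\varphi}f)(x)=\sum_{\sigma y=x}e^{\varphi(y)}f(y)$ acting on $F_{\lambda}(X^{+})$ has a simple leading eigenvalue $e^{P(\varphi)}$, a strictly positive Lipschitz eigenfunction $h$ with $\mathcal{L}_{\varphi}h=e^{P(\varphi)}h$, and a probability eigenmeasure $\nu$ for the dual with $\mathcal{L}_{\varphi}^{*}\nu=e^{P(\varphi)}\nu$; after normalising $\int h\,d\nu=1$ one has $\mu=h\nu$ (see \cite{Bow08, Par90}). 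Because $h$ is continuous and positive on the compact space $X^{+}$, we have $0<\min h\le\max h<\infty$ and $(\min h)\,\nu(C)\le\mu(C)\le(\max h)\,\nu(C)$ for every Borel set $C$. Hence it suffices to prove the two-sided bound for $\nu$ in place of $\mu$.

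Fix $n\in\mathbb{N}^{+}$, $x\in X^{+}$, and write $C=[x_{0}\cdots x_{n-1}]_{n}$. Iterating the dual eigenvalue equation gives $e^{nP(\varphi)}\nu(C)=\int \mathcal{L}_{\varphi}^{n}\mathbf{1}_{C}\,d\nu$. For $z\in X^{+}$, a point $y$ with $\sigma^{n}y=z$ lies in $C$ only if $y=(x_{0}\cdots x_{n-1}z_{0}z_{1}\cdots)$, which is admissible precisely when $A(x_{n-1},z_{0})=1$; for this unique $y$, the points $\sigma^{i}y$ and $\sigma^{i}x$ agree in coordinates $0,\dots,n-1-i$, so the Lipschitz property of $\varphi$ gives $|\varphi_{n}(y)-\varphi_{n}(x)|\le|\varphi|_{\mathrm{Lip}}\sum_{i=0}^{n-1}\lambda^{n-i}\le|\varphi|_{\mathrm{Lip}}\frac{\lambda}{1-\lambda}=:D$. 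Therefore $e^{\varphi_{n}(x)-D}\mathbf{1}_{\{A(x_{n-1},z_{0})=1\}}\le(\mathcal{L}_{\varphi}^{n}\mathbf{1}_{C})(z)\le e^{\varphi_{n}(x)+D}\mathbf{1}_{\{A(x_{n-1},z_{0})=1\}}$, and integrating against $\nu$ immediately yields the upper bound $e^{nP(\varphi)}\nu(C)\le e^{\varphi_{n}(x)+D}$.

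For the lower bound one needs a uniform positive lower bound on the $\nu$-measure of $1$-cylinders, and this is where aperiodicity is used: fix $p$ with $A^{p}>0$; then $e^{pP(\varphi)}\nu[a]_{1}=\int\mathcal{L}_{\varphi}^{p}\mathbf{1}_{[a]_{1}}\,d\nu$, and for every $z$ the symbol $a$ can be joined to $z_{0}$ by an admissible block of length $p+1$, so $(\mathcal{L}_{\varphi}^{p}\mathbf{1}_{[a]_{1}})(z)\ge e^{-p|\varphi|_{\infty}}$, giving $\nu[a]_{1}\ge e^{-p(|P(\varphi)|+|\varphi|_{\infty})}=:c_{0}>0$ for every symbol $a$. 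Since some $a$ with $A(x_{n-1},a)=1$ always exists, $\nu(\{z:A(x_{n-1},z_{0})=1\})\ge c_{0}$, and integrating the pointwise lower bound above gives $e^{nP(\varphi)}\nu(C)\ge c_{0}e^{\varphi_{n}(x)-D}$. Combining the two estimates for $\nu$ and folding in the factors $\min h$ and $\max h$, the lemma follows with $C_{7}:=\max\{1,\ e^{D}\max h,\ (c_{0}e^{-D}\min h)^{-1}\}$. There is no serious obstacle here, since the statement is classical; the only points requiring a little care are the bounded-distortion estimate for $\varphi_{n}$ along cylinders and the use of aperiodicity to bound $\nu$ of $1$-cylinders from below.
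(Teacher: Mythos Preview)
Your argument is correct. The paper, however, does not actually prove this lemma: it states the Gibbs property as a well-known fact and simply refers the reader to Bowen--Ruelle \cite{Bow75} (and, implicitly, \cite{Bow08}). What you have written is exactly the standard Ruelle--Perron--Frobenius derivation that underlies those references: reduce from $\mu=h\nu$ to $\nu$ using positivity of the eigenfunction, compute $e^{nP(\varphi)}\nu(C)=\int\mathcal{L}_{\varphi}^{n}\mathbf{1}_{C}\,d\nu$, and control the integrand by bounded distortion plus a uniform lower bound on $\nu$ of $1$-cylinders via aperiodicity. So there is no genuine difference in approach --- you have simply supplied the details the paper leaves to the literature.

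One small cosmetic remark: in the one-sided setting of the paper the $n$-cylinder $[x_{0}\cdots x_{n-1}]_{n}$ is defined by fixing the first $n$ coordinates (the paper only spells out the two-sided convention), so your identification of the unique $\sigma^{n}$-preimage of $z$ lying in $C$ as $(x_{0}\cdots x_{n-1}z_{0}z_{1}\cdots)$ is indeed the right reading.
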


It is classical \cite{Bow08} that $\mu$ is the eigenmeasure of the transfer operator of $\varphi$, which is defined as follows:
$$\mathcal{L}_{\varphi}:F_{\lambda}(X^{+})\to F_{\lambda}(X^{+}),\quad\mathcal{L}_{\varphi}h(x)=\sum_{\sigma(y)=x}e^{\varphi(y)}h(y).$$
We can further assume $\varphi$ is normalized ,i.e., $\mathcal{L}_{\varphi}1=1$. In particular, we have $P(\varphi)=0$ and $\mathcal{L}_{\varphi}^{*}\mu=\mu$. By a suitable choice of the Markov partition, we can also assume the roof function \(r\) depends only on future coordinates and thus \(r\in F_{\lambda}(X^{+},\mathbb{R})\). To show $g_{t}$ is rapidly mixing, as explained in \cite{Dol98b}, it is sufficient to study the following complex transfer operator:
$$\mathcal{L}_{\varphi+ibr}:F_{\lambda}(X^{+})\to F_{\lambda}(X^{+}),\quad\mathcal{L}_{\varphi+ibr}h(x)=\sum_{\sigma(y)=x}e^{(\varphi+ibr)(y)}h(y),$$
where $b\in\mathbb{R}$. For convenience, denote by $\mathcal{L}_{b}:=\mathcal{L}_{\varphi+ibr}$. In the next section, we will use Lemma \ref{Lemma 3.5} to prove the following Dolgopyat type estimate.

\begin{Proposition}\label{Dolgopyat type estimate}
There exist $C_{8} > 0$, $C_{9}>0$ and $C_{10} > 0$ such that for any $|b|\ge C_{4}$ and any $h \in F_{\lambda}(X^{+})$, we have $||\mathcal{L}_{b}^{C_{8}\log|b|}h||_{b} \le (1 - |b|^{-C_{9}}) ||h||_{b}$, where $||h||_{b}=\max\{|h|_{\infty},|h|_{Lip}/C_{10}|b|\}$.
\end{Proposition}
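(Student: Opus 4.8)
The plan is to follow the now-standard Dolgopyat machinery, adapting the ``Dolgopyat inequality'' argument (as in \cite{Dol98a, Dol98b, Mel18}) to the present setting, using Lemma \ref{Lemma 2.3} as the source of oscillation. First I would set up the iterated transfer operator: write $\mathcal{L}_b^m h(x) = \sum_{\sigma^m y = x} e^{\varphi_m(y) + ib r_m(y)} h(y)$, and record the Lasota--Yorke type bound coming from the normalization $\mathcal{L}_\varphi 1 = 1$, namely $|\mathcal{L}_b^m h|_\infty \le |\mathcal{L}_{|0|}^m |h||_\infty = |\,|h|\,|_\infty$ and a contraction-up-to-lower-order estimate on the Lipschitz seminorm. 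The point of the $b$-dependent norm $\|h\|_b = \max\{|h|_\infty, |h|_{\mathrm{Lip}}/C_{10}|b|\}$ is precisely that, once $C_{10}$ is chosen large enough, the set of ``$b$-slowly varying'' functions (those with $\|h\|_b = |h|_\infty$) is essentially invariant under $\mathcal{L}_b$, so the whole problem reduces to showing that for such $h$ one gains a factor $(1 - |b|^{-C_9})$ in the sup norm after $n_0 := C_8 \log|b|$ iterates.

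The heart of the argument is the following dichotomy applied to the $L^2(\mu)$-mass (or sup norm) of $\mathcal{L}_b^{n_0} h$. Near a point $x$, either the phases $b r_{n_0}(y)$ over the preimages $\sigma^{n_0} y = x$ are, after the natural re-centering, nearly aligned on the whole relevant neighbourhood, in which case there is no cancellation but also no growth beyond $|\mathcal{L}^{n_0}|h|\,|$; or the phases genuinely oscillate, in which case summing the complex exponentials produces a definite loss. To make the second alternative occur somewhere, I would use Lemma \ref{Lemma 2.3}: the identity $\alpha = \Delta^u(w_0,w_1) + \Delta^s(w_1,w_3) + \Delta^u(w_3,w_2) + \Delta^s(w_2,w_0)$ expresses a temporal-distance value as a difference of Birkhoff sums of $r$ along four comparable cylinders, so for $|b| \ge C_4$ the phase difference between two appropriately chosen branches of $\mathcal{L}_b^{n_0}$ differs from a multiple of $2\pi$ by at least $C_3|b|^{-C_2}$ (up to the exponentially small errors controlled by Lemma \ref{Lemma 2.2}, which is why one needs $n_0 \sim \log|b|$ rather than a constant number of steps: the cylinders must be refined until $\lambda^{n_0} \ll |b|^{-C_2}$). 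Concretely, I would fix the cylinder around $w_0$ dictated by Lemma \ref{Lemma 2.3}, pull back the competing branches through $w_1, w_2, w_3$, and use the $b$-slow variation of $h$ together with the Gibbs property (Lemma \ref{Gibbs property}) to control the weights $e^{\varphi_{n_0}}$ from below on that cylinder, so that the oscillation of the phases is not drowned out by variation of the amplitude.

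From here the standard bookkeeping takes over. One introduces a finite cover of $X^+$ by $n_0$-cylinders, defines on each a ``majorant'' function that equals $|h|_\infty$ on most cylinders but is shrunk by $1 - |b|^{-C_9}$ on those cylinders $Z$ for which the Lemma \ref{Lemma 2.3} configuration (or a $\sigma$-translate of it) sits inside, and checks that this majorant is preserved by one more block of iterations: on the ``good'' cylinders one uses the triangle inequality together with the just-established phase oscillation to beat $|h|_\infty$ by the stated factor, and on the remaining cylinders one uses $|\mathcal{L}_b^{n_0}| \le |\mathcal{L}^{n_0}|$ plus the Markov/mixing structure (aperiodicity of $A$, so that after a bounded number of extra steps every cylinder has a preimage inside a good cylinder) to propagate the gain everywhere. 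Iterating a bounded number of such blocks and relabelling the constants $C_8, C_9, C_{10}$ gives the claimed inequality $\|\mathcal{L}_b^{C_8 \log|b|} h\|_b \le (1 - |b|^{-C_9})\|h\|_b$.

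\textbf{Main obstacle.} The genuinely delicate point is not the combinatorial majorant bookkeeping (which is by now routine) but quantifying the phase oscillation with only a \emph{polynomially} small gap $C_3|b|^{-C_2}$ rather than the order-one gap available in the uniformly non-integrable (Dolgopyat) case. This forces the number of iterates to grow like $\log|b|$ and requires carefully tracking how the Lipschitz distortion of $\varphi_{n_0}$ and $r_{n_0}$, and the exponentially small errors in Lemma \ref{Lemma 2.2}, compare to $|b|^{-C_2}$; one must choose $C_8$ large enough that $\lambda^{n_0} = |b|^{C_8 \log\lambda}$ beats $|b|^{-C_2}$ with room to spare, and then verify that the resulting $C_9$ can still be taken independent of $b$. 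Ensuring all these exponents fit together consistently is where the real work of \S\ref{sec 3} and \S\ref{sec 4} lies, with Lemma \ref{Lemma 3.4.4} isolating exactly the cancellation estimate this requires.
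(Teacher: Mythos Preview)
Your plan is sound and hits all the essential ingredients: the Lasota--Yorke bound (the paper's Lemma \ref{Lemma 3.1}), the reduction via the $\|\cdot\|_b$ norm to the ``$b$-slowly varying'' case $|h|_{\mathrm{Lip}}\le 2C_{10}|b|\,|h|_\infty$ (Lemma \ref{Lemma 3.3}), and the use of Lemma \ref{Lemma 2.3} together with Lemma \ref{Lemma 2.2} and the choice $n_0\sim C\log|b|$ to manufacture a phase separation of order $|b|^{-C_2}$ between two specific preimage branches. The paper makes this last step precise in \S\ref{sec 4} via a four-point telescoping identity (equation \eqref{4.3} and Lemma \ref{Lemma 4.2}): the $\Delta^u$-contributions to the temporal distance are shown to be negligible after $n_0$ iterates, which forces the lower bound of Lemma \ref{Lemma 2.3} onto one of the two $\Delta^s$-pairs, and these are precisely differences of $r_{n_0}$ along two inverse branches over a common base point. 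Your description ``pull back the competing branches through $w_1,w_2,w_3$'' is pointing in the right direction but underspecifies this mechanism.

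Where you genuinely diverge from the paper is in the passage from \emph{local} cancellation to \emph{global} sup-norm contraction. You propose the classical Dolgopyat bookkeeping: build a majorant that is shrunk on the good cylinders and then invoke aperiodicity of $A$ so that, after a further block of iterates, every point has a preimage branch landing in a good cylinder, propagating the gain everywhere. The paper instead, following \cite{Pol24}, establishes cancellation at a \emph{single} point, uses the Lipschitz bound on $\mathcal{L}_b^{n_0}h$ to spread it over a ball $U$ of $\mu$-measure $\ge |b|^{-C_{12}}$, reads off an $L^1(\mu)$-contraction (inequality \eqref{3.1}), and upgrades $L^1$ to $|\cdot|_\infty$ in one stroke via the spectral gap of the real operator $\mathcal{L}_\varphi$ (Lemmas \ref{Lemma 3.5} and \ref{Lemma 3.6}). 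Both routes work and yield the same polynomial loss in $|b|$; yours is closer in spirit to \cite{Dol98a,Dol98b}, while the paper's avoids the cylinder-by-cylinder majorant construction and the associated combinatorics at the price of invoking the spectral gap as a black box.
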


\begin{proof}[\textbf{Proof of Theorems \ref{Theorem 1.2} and \ref{Theorem 1.3}}]
The estimate of $\mathcal{L}_b$ in Proposition \ref{Dolgopyat type estimate} also provides a similar estimate for the following transfer operator:
$$\mathcal{L}_{\varphi+sr}: F_{\lambda}(X^+) \to F_{\lambda}(X^+), \quad \mathcal{L}_{\varphi+sr}h(x) = \sum_{\sigma(y)=x} e^{(\varphi+sr)(y)} h(y),$$
for $s = a + ib$ with $|a| \le |b|^{-C_9}$ and $|b| \ge C_4$. Then, $g_t$ is rapidly mixing with respect to \(\mu_{\Phi}\) follows from the argument in \cite{Dol98b}, where all missing details can be found in \cite{Mel18}. Thus, Theorem \ref{Theorem 1.2} is proved. Additionally, following the proof in \cite{Pol01}, the same estimate on \(\mathcal{L}_{\varphi+sr}\) ensures a polynomial error term in the Prime Orbit Theorem, which proves Theorem~\ref{Theorem 1.3}.
\end{proof}

\section{Proof of Proposition \ref{Dolgopyat type estimate}}\label{sec 4}

We follow the strategy in \cite[\S 7.3]{Pol24} to prove Proposition \ref{Dolgopyat type estimate}. The main ingredient in the proof is a cancellation estimate stated in Lemma \ref{Lemma 4.4} which will be proved in the next section.

\begin{Lemma}\label{Lemma 4.1}
There exists $C_{11}>0$ such that for any $|b|\ge C_{4}$, any $h\in F_{\lambda}(X^{+})$ and any $n\ge\mathbb{N}^{+}$,
	$$|\mathcal{L}_{b}^{n}h|_{Lip}\le C_{11}|b||h|_{\infty}+\lambda^{n}|h|_{Lip}.$$
\end{Lemma}
\begin{proof}
The proof can be obtained by direct calculation or by referring to \cite{Dol98a}.
\end{proof}

Recalling, for any $|b|\ge C_{4}$, we introduce a norm on $F_{\lambda}(X^{+})$ by \(||h||_{b}=\max\{|h|_{\infty},|h|_{\text{Lip}}/C_{10}|b|\}\). Fix a $\lambda^{\prime}\in(\lambda,1)$, and then we choose \(C_{10}\geq 1\) such that \(C_{11}/C_{10}+\lambda\le\lambda^{\prime}\). 

\begin{Corollary}\label{Cor 4.2}
For any $|b|\ge C_{4}$, any \(n\ge\mathbb{N}^{+}\), and any \(h\in F_{\lambda}(X^{+})\), we have \(\frac{|\mathcal{L}^{n}_{b}h|_{\text{Lip}}}{C_{10}|b|}\le\lambda^{\prime}||h||_{b}\) and \(||\mathcal{L}^{n}_{b}h||_{b}\le||h||_{b}\).
\end{Corollary}
\begin{proof}
By Lemma \ref{Lemma 4.1}, for any  $|b|\ge C_{4}$  , any \(n\ge\mathbb{N}^{+}\), and any \(h\in F_{\lambda}(X^{+})\),
\begin{equation*}
		\dfrac{|\mathcal{L}^{n}_{b}h|_{\text{Lip}}}{C_{10}|b|}\le \dfrac{C_{11}}{C_{10}}|h|_{\infty}+\dfrac{\lambda^{n}}{C_{10}|b|}|h|_{\text{Lip}}.
	\end{equation*}
	In particular,
	\begin{equation*}
		\dfrac{|\mathcal{L}^{n}_{b}h|_{\text{Lip}}}{C_{10}|b|}\le\bigg(\dfrac{C_{11}}{C_{10}}+\lambda^{n}\bigg)||h||_{b}.
	\end{equation*}
	Note that \(|\mathcal{L}^{n}_{b}h|_{\infty}\le|h|_{\infty}\). Thus, provided \(\frac{C_{11}}{C_{10}}+\lambda\le\lambda^{\prime}\), we have \(||\mathcal{L}^{n}_{b}h||_{b}\le||h||_{b}\) which completes the proof.
\end{proof}

\begin{Lemma}\label{Lemma 4.3}
For any   $|b|\ge C_{4}$, any $h\in F_{\lambda}(X^{+})$ with $|h|_{Lip}\ge 2C_{10}|b||h|_{\infty}$ and any $n\ge\mathbb{N}^{+}$, we have $||\mathcal{L}^{n}_{b}h||_{b}\le\lambda^{\prime}||h||_{b}$.
\end{Lemma}
\begin{proof}
Since $\lambda>0$ is close to 1, we can assume $\lambda\ge 1/2$. We have $|\mathcal{L}^{n}_{b}h|_{\infty}\le|h|_{\infty}\le|h|_{Lip}/2C_{10}b_{m}\le2^{-1}||h||_{b}\le\lambda||h||_{b}$. Thus, by Corollary \ref{Cor 4.2}, we have $||\mathcal{L}^{n}_{b}h||_{b}\le\lambda^{\prime}||h||_{b}$.
\end{proof}

In the next section, we will use the conclusion obtained in Lemma \ref{Lemma 3.5} to prove the following cancellation of terms in a transfer operator.

\begin{Lemma}\label{Lemma 4.4}
There exist $C_{12}>0, C_{13}>0$ and $C_{14}>0$ such that for any $|b|\ge C_{4}$ and any $h\in F_{\lambda}(X^{+})$ with $|h|_{Lip}\le 2C_{10}|b||h|_{\infty}$, there exists a subset $U\subset X^{+}$ with $\mu(U)\ge |b|^{-C_{12}}$ such that $|\mathcal{L}^{C_{13}\log |b|}_{b}h(x)|\le(1-|b|^{-C_{14}})|h|_{\infty}$ for any $x\in U$.
\end{Lemma}

As a corollary of Lemma \ref{Lemma 4.4}, we can obtain a cancellation of the oscillatory integral $\int_{X^{+}}|\mathcal{L}_{b}^{C_{13}\log |b|}h|d\mu$ as follows:
\begin{equation}\label{4.1}
	\begin{aligned}
		\int_{X^{+}}|\mathcal{L}_{b}^{C_{13}\log |b|}h|d\mu=&\int_{U}|\mathcal{L}_{b}^{C_{13}\log|b|}h|d\mu+\int_{X^{+}-U}|\mathcal{L}_{b}^{C_{13}\log|b|}h|d\mu\\
		\le&(1-|b|^{-C_{14}})|h|_{\infty}\mu(U)+|h|_{\infty}\mu(X^{+}-U)\\
		=&(1-|b|^{-C_{14}}\mu(U))|h|_{\infty}\le(1-|b|^{-C_{14}-C_{12}})|h|_{\infty}.
	\end{aligned}
\end{equation}
To strengthen the above $L^{1}$ contraction to a $|\cdot|_{\infty}$-contraction, we require the following lemma. 

\begin{Lemma}\label{Lemma 4.5}
	There exist $C_{15}>0$ and $\delta\in(0,1)$ such that for any $h\in F_{\lambda}(X^{+})$ and any $k\ge\mathbb{N}^{+}$,
	$$||\mathcal{L}_{\varphi}^{k}h||_{Lip}\le \int_{X^{+}}|h|d\mu+C_{15}\delta^{k}||h||_{Lip}.$$
\end{Lemma}
\begin{proof}
	This is a directly corollary of the spectral gap of $\mathcal{L}_{\varphi}$ acts on $F_{\lambda}(X^{+})$ \cite{Bal00}.
\end{proof}

\begin{Lemma}\label{Lemma 4.6}
	There exist $C_{16}>0$ and $ C_{17}>0$ such that for any $|b|\ge C_{4}$ and any $h\in F_{\lambda}(X^{+})$ with $|h|_{Lip}\le 2C_{10}|b||h|_{\infty}$,
	$$|\mathcal{L}_{b}^{(C_{17}+C_{13})\log |b|}h|_{\infty}\le\bigg(1-|b|^{-C_{16}}\bigg)|h|_{\infty}.$$
\end{Lemma}
\begin{proof}
	By \eqref{4.1}, Corollary \ref{Cor 4.2} and Lemma \ref{Lemma 4.5}, we have
	$$
	\begin{aligned}
		&|\mathcal{L}_{b}^{(C_{17}+C_{13})\log |b|}h|_{\infty}\le|\mathcal{L}_{\varphi}^{C_{17}\log |b|}|\mathcal{L}_{b}^{C_{13}\log |b|}h||_{\infty}\\
		\le&\int_{X^{+}}|\mathcal{L}_{b}^{C_{13}\log|b|}h|d\mu+C_{15}\delta^{C_{17}\log |b|}||\mathcal{L}_{b}^{C_{13}\log|b|}h||_{Lip}\\
		\le&(1-|b|^{-C_{14}-C_{12}})|h|_{\infty}+2C_{15}\delta^{C_{17}\log |b|}C_{10}|b||h|_{\infty}\\
		\le&(1-|b|^{-C_{16}})|h|_{\infty}
	\end{aligned}
	$$
	which completes the proof.
\end{proof}

The above $|\cdot|_{\infty}$-contraction implies the following $||\cdot||_{b_{m}}$-contraction.

\begin{Corollary}\label{Cor 4.7}
	For any $|b|\ge C_{4}$ and any $h\in F_{\lambda}(X^{+})$ with $|h|_{Lip}\le 2C_{10}|b||h|_{\infty}$,
	$$||\mathcal{L}_{b}^{(C_{17}+C_{813})\log |b|}h||_{b}\le\bigg(1-|b|^{-C_{16}}\bigg)||h||_{b}.$$
\end{Corollary}
\begin{proof}
	This comes from Corollary \ref{Cor 4.2} and Lemma \ref{Lemma 4.6}.
\end{proof}

\begin{proof}[\textbf{Proof of Proposition \ref{Dolgopyat type estimate}}]
	This comes from Lemma \ref{Lemma 4.3} and Corollary \ref{Cor 4.7}.
\end{proof}

\section{Proof of Lemma \ref{Lemma 4.4}}\label{sec 5}

The proof of the cancellation lemma: Lemma \ref{Lemma 4.4} will use the Diophantine property obtained in Lemma \ref{Lemma 3.5} which follows from the Diophantine assumption on the temporal distance function. Previously, such kind of cancellation lemmas can be obtained by using the assumption that the range of the temporal distance function has positive lower box dimension \cite{Dol98b}.

We begin with some simplifying observations. The first point to observe  is that achieving cancellation at a single point is sufficient. Indeed, according to Lemma \ref{Lemma 4.1}, the estimate $|h|_{\text{Lip}} \le 2C_{10}|b| |h|_{\infty}$ implies $|\mathcal{L}_{b}^{C_{13}\log|b|}h|_{\text{Lip}} \le 2C_{10}|b| |h|_{\infty}$. Therefore, if $|\mathcal{L}_{b}^{C_{13}\log|b|}h(x)| \le (1-|b|^{-C_{14}})|h|_{\infty}$, then for any $y \in X^{+}$ with $d_{\lambda}(x,y) \le (4C_{10}|b|^{C_{14}+1})^{-1}$, we have
\[
\begin{aligned}
|\mathcal{L}_{b}^{C_{13}\log|b|}h(y)| &\le |\mathcal{L}_{b}^{C_{13}\log|b|}h(y) - \mathcal{L}_{b}^{C_{13}\log |b|}h(x)| + (1-|b|^{-C_{14}})|h|_{\infty}\\
	&\le  \dfrac{1}{2}|b|^{-C_{14}}|h|_{\infty} + (1-|b|^{-C_{14}})|h|_{\infty}= (1-2^{-1}|b|^{-C_{14}})|h|_{\infty}.
\end{aligned}
\]
Using the Gibbs property of $\mu$ (in Lemma \ref{Gibbs property}),  it is  easy to show that the set of these points $y$ has $\mu$-measure $\ge |b|^{-C_{12}}$ for some uniform constant $C_{12} > 0$. Then  Lemma \ref{Lemma 4.4}  would follow.

The second point we want to note is that we can always assume $h$ satisfies $|h(x)| \ge |h|_{\infty}3/4$ for any $x \in X^{+}$. Indeed, if not, then for some point $x \in X^{+}$, we would have $|h(x)| \le 3|h|_{\infty}/4$. Consequently, we would obtain
\[
\begin{aligned}
	&|\mathcal{L}_{b}^{C_{13}\log |b|}h(\sigma^{C_{13}\log |b|}(x))|\\
	\le&\sum_{\substack{\sigma^{C_{13}\log |b|}(y)=\sigma^{C_{13}\log |b|}(x)\\y\not=x}}e^{\varphi_{C_{13}\log|b|}(y)}|h|_{\infty} + e^{\varphi_{C_{13}\log |b|}(x)}3|h|_{\infty}/4\\
	\le&(1-e^{-C_{13}\log|b||\varphi|_{\infty}}/4)|h|_{\infty} \le (1-|b|^{-C_{14}})|h|_{\infty}.
\end{aligned}
\]
Then the required result would follow  from the argument presented in the previous paragraph.

We will  use the following elementary  inequality to cancel the terms in the  transfer operator.

\begin{Lemma}\label{Lemma 5.1}
Assume $0\not=v_{1},v_{2}\in\mathbb{C}$. If $|\frac{v_{1}}{|v_{1}|}-\frac{v_{2}}{|v_{2}|}||\ge\varepsilon$ and $|v_{1}|\le|v_{2}|$, then $|v_{1}+v_{2}|\le (1-\varepsilon^{2}/4)|v_{1}|+|v_{2}|$. 
\end{Lemma}

Denote $n_{b} = C_{13}\log|b|$ and $\eta = h/|h|$. Note that $|\eta| \equiv 1$. Given that $|h|_{\text{Lip}} \le 2C_{10}|b||h|_{\infty}$ and $|h(x)| \ge 3|h|_{\infty}/4$ for any $x \in X^{+}$, it is straightforward to show that $|\eta|_{\text{Lip}} \le 2C_{10}|b|$. Our objective is to demonstrate the existence of $x \in X^{+}$ and $y_{1}, y_{2} \in \sigma^{-n_{b}}(x)$ such that
\begin{equation}\label{5.1}
	|e^{ibr_{n_{b}}(y_{1})}\eta(y_{1}) - e^{ibr_{n_{b}}(y_{2})}\eta(y_{2})| \ge |b|^{-C_{18}},
\end{equation}
for some uniform constant $C_{18} > 0$. 
Then, by Lemma \ref{Lemma 5.1},
\[
\begin{aligned}
	&|\mathcal{L}_{b}^{n_{b}}h(x)|\\
	\le & \sum_{\sigma^{n_{b}}(y)=x; y \neq y_{1},y_{2}}e^{\varphi_{n_{b}}(y)}|h(y)|+ \big|e^{\varphi_{n_{b}}(y_{1})}e^{ibr_{n_{b}}(y_{1})}h(y_{1})+e^{\varphi_{n_{b}}(y_{2})}e^{ibr_{n_{b}}(y_{2})}h(y_{2})\big| \\
	\le & \sum_{\sigma^{n_{b}}(y)=x; y \neq y_{1},y_{2}}e^{\varphi_{n_{b}}(y)}|h(y)| + (1-|b|^{-2C_{18}}/4)e^{\varphi_{n_{b}}(y_{1})}|h(y_{1})|+e^{\varphi_{n_{b}}(y_{2})}|h(y_{2})| \\
	\le & (1 - e^{-n_{b}|\varphi|_{\infty}}|b|^{-2C_{18}})|h|_{\infty} \le (1-|b|^{-C_{14}})|h|_{\infty},
\end{aligned}
\]
for some uniform constant $C_{14} > 0$. Then Lemma \ref{Lemma 4.4} would follow from the argument presented at the beginning of this section.

Recalling the constants in Lemma \ref{Lemma 3.5}. For any $|b|\ge C_{4}$, by Lemma \ref{Lemma 3.5}, there exist $w_{1}, w_{2}, w_{3}\in X$ with $w_{1}\in W^{u}_{N_{0}}(w_{0})$, $w_{2}\in W^{s}_{N_{0}}(w_{0})$ and $w_{3}\in W^{u}_{N_{0}}(w_{2})\cap W^{s}_{N_{0}}(w_{1})$ such that 
\begin{equation}\label{5.2}
\big|e^{-ib(\Delta^{u}(w_{0},w_{1})+\Delta^{s}(w_{1},w_{3})+\Delta^{u}(w_{3},w_{2})+\Delta^{s}(w_{2},w_{0}))}-1\big|\ge C_{3} |b|^{-C_{2}}.
\end{equation}
Denote by $\Pi_{+}:X\to X^{+}$ the coordinate projection. For each $0 \le j \le 3$, we define 
$$x_{j} := \Pi_{+}\sigma^{-(n_{b}-N_{0})}(w_{j}) \in X^{+}\quad\text{and}\quad I_{j}=e^{ibr_{n_{b}-N_{0}}(x_{j})}\eta(x_{j}).$$
Note that $|I_{j}|=1$ for each $0\le j \le 3$. We can express
\begin{equation}\label{5.3}
\begin{aligned}
	& I_{0} -e^{-ib(\Delta^{u}(w_{0},w_{1})+\Delta^{s}(w_{1},w_{3})+\Delta^{u}(w_{3},w_{2})+\Delta^{s}(w_{2},w_{0}))}I_{0}\\
	=&\big(I_{1} - e^{-ib\Delta^{u}(w_{0},w_{1})}I_{0}\big)e^{-ib(\Delta^{s}(w_{1},w_{3})+\Delta^{u}(w_{3},w_{2})+\Delta^{s}(w_{2},w_{0}))}\\
	&+\big(I_{3} - e^{-ib\Delta^{s}(w_{1},w_{3})}I_{1}\big)e^{-ib(\Delta^{u}(w_{3},w_{2})+\Delta^{s}(w_{2},w_{0}))}\\
	&+\big(I_{2} - e^{-ib\Delta^{u}(w_{3},w_{2})}I_{3}\big)e^{-ib(\Delta^{s}(w_{2},w_{0}))}\\
	&+\big(I_{0} - e^{-ib\Delta^{s}(w_{2},w_{0})}I_{2}\big).
\end{aligned}
\end{equation}

\begin{Lemma}\label{Lemma 5.2}
We have $\big|I_{1} - e^{-ib\Delta^{u}(w_{0},w_{1})}I_{0}\big|\le \frac{1}{4}C_{3}|b|^{-C_{2}}$ and $\big|I_{2} - e^{-ib\Delta^{u}(w_{3},w_{2})}I_{3}\big|\le\frac{1}{4}C_{3}|b|^{-C_{2}}$.
\end{Lemma}
\begin{proof}
Since $w_{1}\in W^{u}_{N_{0}}(w_{0})$, we have $(x_{1})_{i}=(x_{0})_{i}$ for any $i\le n_{b}-2N_{0}$. Since $|\eta|_{\text{Lip}}\le 2C_{10}|b|$, we have
\begin{equation}\label{5.4}
|\eta(x_{1})-\eta(x_{0})|\le 2C_{10}|b|\lambda^{n_{b}-2N_{0}}\le \dfrac{1}{8}C_{3}|b|^{-C_{2}},
\end{equation}
provided $C_{13}>0$ is large enough where $n_{b}=C_{13}\log |b|$. Meanwhile, by Lemma \ref{Lemma 2.4}, 
	\begin{equation}\label{5.5}
		\begin{aligned}
			&|\Delta^{u}(w_{0},w_{1})-[r_{n_{b}-N_{0}}(x_{0})-r_{n_{b}-N_{0}}(x_{1})]|\\
			=&|\Delta^{u}(w_{0},w_{1})-[r_{n_{b}-N_{0}}(\sigma^{-n_{b}+N_{0}}(w_{0}))-r_{n_{b}-N_{0}}(\sigma^{-n_{b}+N_{0}}(w_{1}))]|\\
			\le &C_{1}\lambda^{n_{b}-N_{0}}\le\dfrac{1}{8}C_{3}|b|^{-C_{2}-1},
		\end{aligned}
	\end{equation}
	provided $C_{13}>0$ is large enough. Now, we can relate the bound \eqref{5.5}  to the estimate below.
	$$
	\begin{aligned}
		&|\eta(x_{1})-\eta(x_{0})|\\
		=&|e^{-ibr_{n_{b}-N_{0}}(x_{1})}I_{1}-e^{-ibr_{n_{b}-N_{0}}(x_{0})}I_{0}|\\
		=&|I_{1}-e^{-ib[r_{n_{b}-N_{0}}(x_{0})-r_{n_{b}-N_{0}}(x_{1})]}I_{0}|\\
		\ge&|I_{1}-e^{-ib\Delta^{u}(w_{0},w_{1})}I_{0}|-|b||\Delta^{u}(w_{0},w_{1})-[r_{n_{b}-N_{0}}(x_{0})-r_{n_{b}-N_{0}}(x_{1})]|\\
		\ge&|I_{1}-e^{-ib\Delta^{u}(w_{0},w_{1})}I_{0}|-\dfrac{1}{8}C_{3}|b|^{-C_{2}}.
	\end{aligned}
	$$
	Together with \eqref{5.4}, we have
	$$|I_{1}-e^{-ib\Delta^{u}(w_{0},w_{1})}I_{0}|\le\dfrac{1}{4}C_{3}|b|^{-C_{2}}$$
	which proves the first bound. The second bound follows the same argument and thus completing the proof.
\end{proof}

Now, using the bounds obtained in Lemma \ref{Lemma 5.2}, together with \eqref{5.2} and \eqref{5.3}, we have
\begin{equation*}
\big|\big(I_{3} - e^{-ib\Delta^{s}(w_{1},w_{3})}I_{1}\big)e^{-ib(\Delta^{u}(w_{3},w_{2})+\Delta^{s}(w_{2},w_{0}))}+\big(I_{0} - e^{-ib\Delta^{s}(w_{2},w_{0})}I_{2}\big)\big|\ge \dfrac{1}{2}C_{3}|b|^{-C_{2}}.
\end{equation*}
In particular, $\big|I_{3} - e^{-ib\Delta^{s}(w_{1},w_{3})}I_{1}\big|\ge \frac{1}{4}C_{3}|b|^{-C_{2}}$ or $\big|I_{0}-e^{-ib\Delta^{s}(w_{2},w_{0})}I_{2}\big|\ge \frac{1}{4}C_{3}|b|^{-C_{2}}$. Without loss of generality, assume $\big|I_{3} - e^{-ib\Delta^{s}(w_{1},w_{3})}I_{1}\big|\ge \frac{1}{4}C_{3}|b|^{-C_{2}}$.
Since $r$ depends only on future coordinates and $w_{3}\in W^{s}_{N_{0}}(w_{1})$, by the definition of $\Delta^{s}$, we have $$\Delta^{s}(w_{1},w_{3})=r_{N_{0}}(\Pi_{+}(w_{3}))-r_{N_{0}}(\Pi_{+}(w_{1})).$$
Therefore,
$$
\begin{aligned}
	\frac{1}{4}C_{3}|b|^{-C_{2}}\le &|I_{3}-e^{-ib\Delta^{s}(w_{1},w_{3})}I_{1}|\\
	=&|I_{3}-e^{-ib[r_{N_{0}}(\Pi_{+}(w_{3}))-r_{N_{0}}(\Pi_{+}(w_{1}))]}I_{1}|\\
	=&|e^{ibr_{n_{b}}(x_{3})}\eta(x_{3})-e^{ibr_{n_{b}}(x_{1})}\eta(x_{1})|,
\end{aligned}
$$
which proves \eqref{5.1} since $\sigma^{n_{b}}(x_{3})=\sigma^{n_{b}}(x_{1})$. Thus the proof of Lemma \ref{Lemma 4.4} is completed.

 \end{document}